 \theoremstyle{definition}
 \newtheorem{defn}{Definition}
 \theoremstyle{definition}
  \newtheorem{example}{Example}
  \theoremstyle{plain}
  \newtheorem{thm}{Theorem}
  \theoremstyle{plain}
  \newtheorem{prop}{Proposition}
  \theoremstyle{remark}
  \newtheorem*{rem*}{Remark}
  \theoremstyle{plain}
  \newtheorem{cor}{Corollary}
  \theoremstyle{remark}
  \newtheorem*{acknowledgement*}{Acknowledgement}
\begin{document}

\title{Diagonal Vectors Of Shifted Young Tableaux}

\address{Department of Mathematics, Massachusetts Institute of Technology,
Cambridge, MA 02139}

\email{dorian@mit.edu}

\keywords{Young Tableaux, Schur Functions, Minkowski Sum, Generalized Permutohedron}

\author{Dorian Croitoru}

\begin{abstract}
We study vectors formed by entries on the diagonal of standard Young
tableaux of shifted shapes. Such vectors are in bijection with integer
lattice points of certain integral polytopes, which are Minkowski
sums of simplices. We also describe vertices of these polytopes, and
construct corresponding shifted Young tableaux.
\end{abstract}
\maketitle

\section{Shifted Young Diagrams And Tableax}

\begin{defn}
Let $\lambda=(\lambda_{1},\dots,\lambda_{n})$ be a partition with
at most $n$ parts. The \emph{shifted Young diagram of shape $\lambda$
}(or just \emph{$\lambda$-shifted diagram})\emph{ }is the set 

\[
D_{\lambda}=\left\{ (i,\ j)\in\mathbb{R}^{2}\vert\ 1\le j\le n,\ j\le i\le n+\lambda_{j}\right\} .\]
We think of $D_{\lambda}$ as a collection of boxes with $n+1-i+\lambda_{i}$
boxes in row $i$, and such that the leftmost box of the $i^{\mathrm{th}}$
row is also in the $i^{\mathrm{th}}$ column. A \emph{shifted standard
Young tableau shape $\lambda$ }(or just \emph{$\lambda$-shifted
tableau}) is a bijective map $T:D_{\lambda}\rightarrow\left\{ 1,\dots,|D_{\lambda}|\right\} $
which is increasing along rows and down columns, i.e. $T(i,j)<T(i,j+1)$
and $T(i,j)<T(i+1,j)$ ($|D_{\lambda}|={n+1 \choose 2}+\lambda_{1}+\cdots+\lambda_{n}$
is the number of boxes in $D_{\lambda}$). The \emph{diagonal vector
}of such a tableau $T$ is $diag(T)=\left(T(1,1),T(2,2),\dots,T(n,n)\right)$. 
\end{defn}
\begin{example}
The following is a shifted standard Young tableau for $n=4,\ \lambda=(4,2,1,0)$.
Its diagonal vector is $(1,\ 4,\ 7,\ 17)$.

\medskip{}

\includegraphics[scale=0.6]{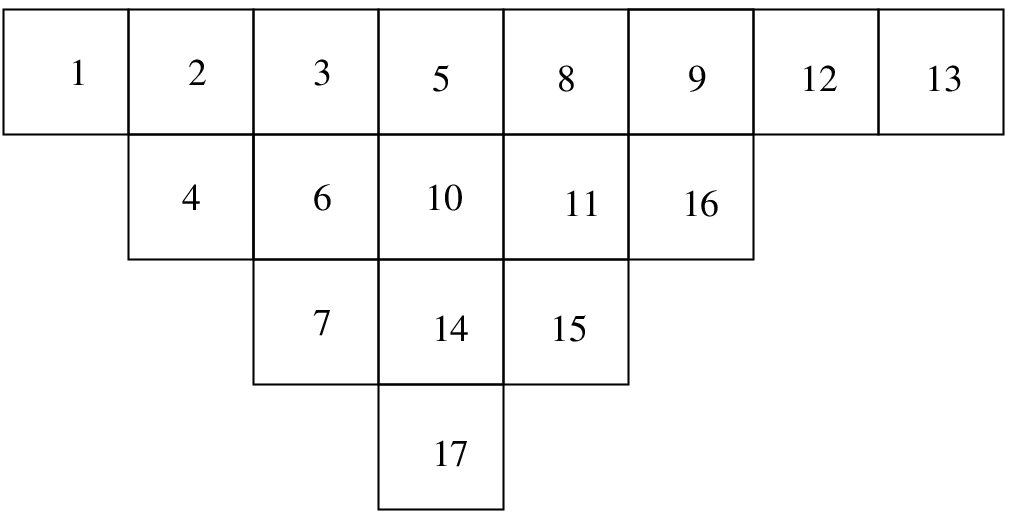}

\medskip{}

\end{example}
We are interested in describing the possible diagonal vectors appearing
in $\lambda$-shifted Young tableaux. The problem was solved in the
case $\lambda=(0,0,\dots,0)$ (the empty partition) by A. Postnikov,
in \cite[Section 15]{key-2}. Specifically, it was shown that diagonal
vectors of the shifted triangular shape $D_{\emptyset}$ are in bijection
with lattice points of the $(n-1)$-dimensional \emph{associahedron}
$\textrm{Ass}_{n-1}$(to be defined in section 2). Moreover, a simple
explicit construction was given for the ''extreme'' diagonal vectors,
i.e. the ones corresponding to the vertices of $\textrm{Ass}_{n-1}$. 

In this article, we aim to generalize Postnikov's results to arbitrary
shifted shapes. Specifically, in section 2 we will prove that diagonal
vectors of shifted $\lambda$-tableaux are in bijection with lattice
points of a certain polytope $\mathbf{P}_{\lambda}$. This polytope
is a Minkowski sum of simplices in $\mathbb{R}^{n}$ and its combinatorial
structure only depends on the length of the partition $\lambda$.
In particular, if the length is $n$, $\mathbf{P}_{\lambda}$ turns
out to be combinatorially equivalent to $\mathrm{Ass}_{n}$. In section
3 we shall give an exlpicit construction of $\lambda$-shifted tableaux
whose diagonal vectors coorespond to the vertices of $\mathbf{P}_{\lambda}$.

\medskip{}

For a non-negative integer vector $\left(a_{1},...,a_{n}\right)$,
let $N_{\lambda}(a_{1},\dots,a_{n})$ be the number of standard $\lambda$-shifted
tableaux $T$ such that $T(i+1,i+1)-T(i,i)-1=a_{i}$ for $i=1,\dots,\ n$,
where we set $T(n+1,n+1)={n+1 \choose 2}+\lambda_{1}+\cdots+\lambda_{n}+1$
. 

\begin{thm}
\label{thm:main}We have the following identity: 

\[
\sum_{a_{1},\dots,a_{n}\ge0}N_{\lambda}(a_{1},\dots,a_{n})\frac{t_{1}^{a_{1}}}{a_{1}!}\cdots\frac{t_{n}^{a_{n}}}{a_{n}!}=\]

\[
=\frac{1}{\prod_{i=1}^{n}(\lambda_{i}+n-i)!}\cdot\prod_{1\le i<j\le n}(t_{i}+\cdots+t_{j-1})\cdot s_{\lambda}(t_{1}+\cdots+t_{n},t_{2}+\cdots+t_{n},\dots,t_{n})\]

where $s_{\lambda}$ denotes the Schur symmetric polynomial associated
to $\lambda$.
\end{thm}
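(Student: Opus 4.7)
The plan is to simplify the right-hand side into a single determinant via Jacobi's bialternant formula, and then establish the resulting identity combinatorially through a Lindstr\"om-Gessel-Viennot-style sign-reversing involution applied to ``attempted'' block fillings of $\lambda$-shifted tableaux.

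Setting $u_i = t_i + t_{i+1} + \cdots + t_n$, we have $t_i + \cdots + t_{j-1} = u_i - u_j$, so the product $\prod_{i<j}(t_i + \cdots + t_{j-1})$ equals the Vandermonde $\prod_{i<j}(u_i - u_j)$. Jacobi's bialternant formula $s_\lambda(u)\prod_{i<j}(u_i - u_j) = \det\bigl(u_i^{\lambda_j + n - j}\bigr)_{i,j}$ then reduces the right-hand side, with $m_j := \lambda_j + n - j$, to
\[
\det\!\left(\frac{u_i^{m_j}}{m_j!}\right)_{i,j=1}^n.
\]
Expanding each entry gives $\frac{u_i^{m_j}}{m_j!} = \sum_{(v_i,\ldots,v_n)} \prod_k \frac{t_k^{v_k}}{v_k!}$, the sum over weak compositions of $m_j$ with parts indexed by $\{i, i+1, \ldots, n\}$. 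Equating coefficients of $\prod_k t_k^{a_k}$ on both sides converts the claimed identity into the combinatorial statement
\[
N_\lambda(a_1,\ldots,a_n) = \sum_{\sigma \in S_n}\mathrm{sgn}(\sigma)\, \sum_{(b_{i,k})} \prod_{k=1}^n \binom{a_k}{b_{1,k},\, b_{2,k},\, \ldots,\, b_{k,k}},
\]
where the inner sum ranges over nonnegative integer matrices $(b_{i,k})_{1 \le i \le k \le n}$ with row sums $\sum_{k \ge i} b_{i,k} = m_{\sigma(i)}$ and column sums $\sum_{i \le k} b_{i,k} = a_k$.

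I interpret the summands combinatorially as follows. For a tableau $T$ with gap vector $a$, the blocks $B_k = [T(k,k), T(k+1,k+1))$ contain the diagonal cell $(k,k)$ plus $a_k$ non-diagonal cells; letting $b_{i,k}$ count those coming from row $i$ yields a valid matrix for $\sigma = \mathrm{id}$. Conversely, given a matrix $(b_{i,k})$ with $\sigma = \mathrm{id}$, the cells of each block $B_k$ are forced to be the next $b_{i,k}$ yet-unplaced non-diagonal cells of row $i$, and the multinomial $\binom{a_k}{b_{1,k},\ldots,b_{k,k}}$ counts all interleavings of the $k$ row-ordered subsequences, i.e., all candidate chronological orderings of block $B_k$'s non-diagonal cells ignoring the column constraint $T(i-1, j) < T(i, j)$.

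The proof concludes by constructing a sign-reversing involution on the ``invalid'' candidates, those whose chronological ordering violates the column constraint somewhere. In the spirit of the classical Lindstr\"om-Gessel-Viennot argument, one locates the earliest violation (say, by increasing block index, then by increasing row) and swaps the tails of the two offending row-distributions: this exchanges the roles of the two rows, alters $\sigma$ by an adjacent transposition, and hence flips the sign. The surviving valid candidates are in bijection with $\lambda$-shifted SYTs having gap vector $a$, contributing $N_\lambda(a)$ to the $\sigma = \mathrm{id}$ term, finishing the proof. The main obstacle is making the involution precise in the shifted setting: in particular, handling the diagonal cell $(k,k)$, which must remain first in $B_k$, and verifying that the tail swap genuinely yields a matrix whose row-sums realize the intended adjacent transposition of $\sigma$ without creating cascading new violations.
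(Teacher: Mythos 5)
Your algebraic reduction of the right-hand side is correct and is a genuinely different starting point from the paper: writing $u_i=t_i+\cdots+t_n$, the product $\prod_{i<j}(t_i+\cdots+t_{j-1})$ is indeed the Vandermonde $\prod_{i<j}(u_i-u_j)$, the bialternant formula turns the right-hand side into $\det\bigl(u_i^{m_j}/m_j!\bigr)$ with $m_j=\lambda_j+n-j$, and your extraction of the coefficient of $t_1^{a_1}\cdots t_n^{a_n}/(a_1!\cdots a_n!)$ as a signed sum of products of multinomials over matrices $(b_{i,k})_{i\le k}$ with prescribed row and column sums is right. Your identification of the $\sigma=\mathrm{id}$, ``valid'' configurations with shifted SYTs of gap vector $(a_1,\dots,a_n)$ is also sound: $m_i$ is exactly the number of non-diagonal cells of row $i$, and the non-diagonal cells of the value-block $B_k$ can only come from rows $1,\dots,k$. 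By contrast, the paper avoids all of this: it realizes both sides as the volume of the section $P_\lambda(\mathbf{x})$ of the order polytope of $D_\lambda$ with fixed diagonal $x_1>\cdots>x_n$, quoting Baryshnikov--Romik for the closed-form volume and decomposing the polytope into products of simplices indexed by tableaux for the other expression. Your route, if completed, would yield a self-contained combinatorial proof not relying on that external volume formula, which would be a genuine gain.

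However, as written the proof has a real gap: the sign-reversing involution, which is the entire content of the combinatorial step, is never constructed, and you yourself flag the unresolved points. It is not enough to say ``locate the earliest violation and swap the tails of the two offending row-distributions.'' One must specify exactly what data is swapped (the tail of the interleaving? the tail of the vectors $(b_{i,k})_k$? both?), verify that after the swap the new row sums are $m_{\sigma'(i)}$ and $m_{\sigma'(i+1)}$ for $\sigma'=\sigma\cdot(i,i+1)$ --- this is a nontrivial bookkeeping identity that in the classical Lindstr\"om--Gessel--Viennot setting is arranged by choosing the right shifts of the paths, and here must account for the staircase offsets of the shifted diagram and for the fixed diagonal entries --- and check that the map is an involution (the ``earliest violation'' must be preserved by the swap, which fails for naive choices) with no fixed points in the terms with $\sigma\neq\mathrm{id}$. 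None of these verifications is routine in the shifted setting, and until they are carried out the argument does not establish the identity. The cleanest repair would likely be to re-encode the rows as a family of lattice paths so that the column-strictness condition becomes a non-crossing condition and the standard tail-swap applies verbatim; alternatively, one can simply follow the paper's volume computation, which sidesteps the involution entirely.
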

\begin{proof}
Consider a vector $\mathbf{x}=(x_{1}>x_{2}>\dots>x_{n})$. Define
the polytope 

\[
P_{\lambda}(\mathbf{x})=\{(p_{ij})_{(i,j)\in D_{\lambda}}|\ 0\le p_{ij}\ge p_{i(j+1)},\ p_{ij}\ge p_{(i+1)j},\ p_{ii}=x_{i}\}.\]
Thus $P_{\lambda}(\mathbf{x})$ is the section of the order polytope
of shape $D_{\lambda}$ where the values along the main diagonal are
$x_{1},...,\ x_{n}$. If $\lambda=\emptyset$, this polytope is known
as the \emph{Gelfand-Tsetlin polytope}, which has important connections
to finite-dimensional representations of $\mathfrak{gl}_{n}\mathbb{C}$
(see \cite{key-4}).\emph{ }Our proof strategy is to compare two different
formulas for the volume of $P_{\lambda}(\mathbf{x})$, one of which
is more direct and the other is a summation over standard $\lambda$-shifted
Young tableaux. By \cite[Proposition 12]{key-1}, 

\begin{eqnarray}
vol(P_{\lambda}(\mathbf{x})) & = & \frac{1}{\prod_{i=1}^{n}(\lambda_{i}+n-i)!}\cdot\prod_{1\le i<j\le n}(x_{i}-x_{j})\cdot s_{\lambda}(\mathbf{x}).\label{eq:vol1}\end{eqnarray}
On the other hand, there is a natural map $\phi$ from $P_{\lambda}(\mathbf{x})$
(defined except on a set of measure 0), to the set of standard $\lambda$-shifted
Young tableaux, given as follows: Let $\mathbf{p=}(p_{ij})_{(i,\ j)\in D_{\lambda}}\in P_{\lambda}(\mathbf{x})$
be a point with distinct coordinates. Arrange the $p_{ij}$'s in decreasing
order and define the tableau $T=\phi(\mathbf{p})$ by writing $k$
in box $(i,\ j)$ if $p_{ij}$ is the $k^{\mathrm{th}}$ element in
the above list. By the definition of $P_{\lambda}(\mathbf{x})$, it
is clear that $T$ is a standard $\lambda$-shifted Young tableau.
Given a standard $\lambda$-shifted tableau $T$ with diagonal vector
$diag(T)=\{d_{1},\dots,d_{n}\}$, it is easy to see that $\phi^{-1}(T)$
is isomorphic to the set 

\[
\{(y_{i})\in\mathbb{R}^{|T|}\vert\ y_{1}>y_{2}>\dots>y_{|T|}>0,\ y_{d_{i}}=x_{i}\}\]
which is a direct product of (inflated) simplices

\[
\{x_{1}=y_{1}>y_{2}\dots>y_{d_{2}-1}>x_{2}\}\times\cdots\times\{x_{n}=y_{d_{n}}>y_{d_{n}+1}\dots>y_{|T|}>0\}\]
 Therefore, 

\[
vol(\phi^{-1}(T))=\frac{(x_{1}-x_{2})^{a_{1}}}{a_{1}!}\cdot\cdots\cdot\frac{(x_{n-1}-x_{n})^{a_{n-1}}}{a_{n-1}!}\cdot\frac{x_{n}^{a_{n}}}{a_{n}!}.\]
Summing over all $T$, we obtain

\begin{eqnarray*}
vol(P_{\lambda}(\mathbf{x})) & = & \sum_{T}vol(\phi^{-1}(T))\\
 & = & \sum_{a_{1},\dots,a_{n}\ge0}N_{\lambda}(a_{1},\dots,a_{n})\frac{(x_{1}-x_{2})^{a_{1}}}{a_{1}!}\cdot\cdots\cdot\frac{(x_{n-1}-x_{n})^{a_{n-1}}}{a_{n-1}!}\cdot\frac{x_{n}^{a_{n}}}{a_{n}!}.\end{eqnarray*}
Comparing the last formula to (\ref{eq:vol1}), and making the substitutions 

$t_{1}=x_{1}-x_{2},\dots,\ t_{n-1}=x_{n-1}-x_{n},\ t_{n}=x_{n}$,
we obtain the identity in the theorem. 
\end{proof}

\section{Generalized Permutohedra}

In this section we recall the setup from \cite[Section 6]{key-2}.
Let $n\in\mathbb{N}$ and let $e_{1},\dots,\ e_{n}$ denote the standard
basis of $\mathbb{R}^{n}$. For a subset $I\in\{1,2,\dots,n\}$, let
$\Delta_{I}=Conv\{e_{i}\vert\ i\in I\}$, which is an $|I|$-dimensional
simplex. A large class of \emph{generalized permutohedra }(cf. \cite[Section 6]{key-2})
is given by subsets of $\mathbb{R}^{n}$ of the form 

\[
P_{n}^{y}(\{y_{I}\})=\sum_{\emptyset\neq I\subseteq\{1,\dots,n\}}y_{I}\Delta_{I}\]
 i.e. $P_{n}^{y}(\{y_{I}\})$ is the Minkowski sum of the simplices
$\Delta_{I}$ scaled by $y_{I}\ge0$. It's not hard to see that if
$y_{I}=y_{J}$, whenever $|I|=|J|$, then $P_{n}^{y}(\{y_{I}\})$
is the usual permutohedron obtained by taking the convex hull of points
$(x_{1},\dots,\ x_{n})$ such that $x_{1},\dots,\ x_{n}$ is a permutation
of the numbers 

\[
z_{[n]}=\sum_{I\subseteq[n]}y_{I},\ z_{[n-1]}=\sum_{I\subseteq[n-1]}y_{I},\dots,z_{\{1\}}=y_{\{1\}}.\]
Generalized permutohedra have been studied extensively in \cite{key-2}.
One particular example of a generalized permutohedron, the \emph{associahedron}
, is defined as $\mathrm{Ass}_{n}=\sum_{1\le i\le j\le n}\Delta_{[i,\ j]}$.
It is also known as the \emph{Stasheff polytope} and it first appeared
in the work of Stasheff (cf. \cite{key-3}.)

\begin{prop}
\label{pro:newt2}For any subsets $I_{1},\dots,I_{k}\subseteq[n]$,
and any non-negative integers $a_{1},\dots,a_{n}$, the coefficient
of $t_{1}^{a_{1}}\cdots t_{n}^{a_{n}}$ in 

\begin{equation}
\prod_{j=1}^{k}\left(\sum_{i\in I_{j}}t_{i}\right)\label{eq:newt2}\end{equation}
is non-zero if and only if $(a_{1},\dots,a_{n})$ is an integer lattice
point of the polytope $\sum_{j=1}^{k}\Delta_{I_{j}}$. 
\end{prop}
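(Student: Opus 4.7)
The plan is to reduce the proposition to a bipartite matching problem. First I would expand \eqref{eq:newt2}: since every term in each factor has coefficient $+1$, the coefficient of $t_1^{a_1}\cdots t_n^{a_n}$ counts sequences $(i_1,\dots,i_k)$ with $i_j\in I_j$ and $|\{j:i_j=i\}|=a_i$ for each $i$. This count is non-negative, so it is non-zero iff some such sequence exists, equivalently iff $(a_1,\dots,a_n)=\sum_{j=1}^k e_{i_j}$ for some valid choice. One direction of the proposition is then immediate: any such sum is a sum of vertices of the $\Delta_{I_j}$, so it is an integer point of $\sum_j\Delta_{I_j}$.

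For the non-trivial direction I would start with an integer point $a\in\sum_j\Delta_{I_j}$, write it as $\sum_j v_j$ with $v_j\in\Delta_{I_j}$, and derive the key inequality
\[
\sum_{i\in S}a_i\ \ge\ |\{j:I_j\subseteq S\}|,\qquad S\subseteq[n].
\]
Indeed, whenever $I_j\subseteq S$ the convex combination $v_j$ is supported in $S$, so $\sum_{i\in S}(v_j)_i=1$, while the remaining summands are non-negative. I would also note that $\sum_i a_i=k$, since each $\Delta_{I_j}$ lies in the coordinate-sum-one hyperplane.

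Next I would invoke the capacitated version of Hall's marriage theorem on the bipartite graph with left vertex set $[k]$, right vertex $i\in[n]$ of capacity $a_i$, and $j\sim i$ iff $i\in I_j$: since $k=\sum_i a_i$, a left-saturating assignment $f:[k]\to[n]$ with $f(j)\in I_j$ and $|f^{-1}(i)|=a_i$ exists precisely when Hall's condition $|T|\le\sum_{i\in N(T)}a_i$ holds for every $T\subseteq[k]$. Applying the displayed inequality to $S=N(T)=\bigcup_{j\in T}I_j$ and using $T\subseteq\{j:I_j\subseteq N(T)\}$ verifies the condition, and $f$ then yields the desired decomposition $a=\sum_{j=1}^k e_{f(j)}$.

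The main obstacle is spotting the correct Hall-type inequality; once it is written down, the rest is a direct appeal to classical matching theory. Conceptually the result is the integer decomposition property for Minkowski sums of coordinate simplices, a standard feature of this matroidal family of polytopes.
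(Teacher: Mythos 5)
Your proof is correct, and it takes a genuinely different route from the paper's. Both arguments begin with the same observation: since every monomial in every factor has coefficient $1$, the coefficient of $t_{1}^{a_{1}}\cdots t_{n}^{a_{n}}$ is non-zero exactly when $(a_{1},\dots,a_{n})$ is a sum of vertices $e_{i_{j}}$ with $i_{j}\in I_{j}$. At that point the paper simply cites Postnikov's Proposition 14.12, which asserts that the lattice points of $\sum_{j}\Delta_{I_{j}}$ are precisely such sums of vertices, whereas you prove this fact from scratch. Your argument is sound: from a real decomposition $a=\sum_{j}v_{j}$ with $v_{j}\in\Delta_{I_{j}}$ you correctly extract the inequalities $\sum_{i\in S}a_{i}\ge|\{j:I_{j}\subseteq S\}|$ and $\sum_{i}a_{i}=k$, and applying them with $S=N(T)=\bigcup_{j\in T}I_{j}$ verifies the capacitated Hall condition $|T|\le\sum_{i\in N(T)}a_{i}$ (obtainable from ordinary Hall by splitting vertex $i$ into $a_{i}$ copies); the resulting left-saturating assignment meets every capacity with equality because $\sum_{i}a_{i}=k$, yielding the vertex decomposition. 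What the paper's route buys is brevity and placement within the general theory of generalized permutohedra; what yours buys is a self-contained, elementary proof that makes visible the transversal-theoretic content of the integer decomposition property for Minkowski sums of coordinate simplices. The one point worth stating explicitly if you write this up is the reduction of the capacitated Hall theorem to the classical one (vertex splitting), so that the appeal to ``classical matching theory'' is fully grounded.
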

\begin{proof}
It's easy to see that the coefficient of $t_{1}^{a_{1}}\cdots t_{n}^{a_{n}}$
in (\ref{eq:newt2}) is non-zero if and only if $(a_{1},\dots,\ a_{n})$
can be written as a sum of vertices of the simplices $\Delta_{I_{1}},\dots,\ \Delta_{I_{k}}$.
By \cite[Proposition 14.12]{key-2}, this happens if and only if $(a_{1},\dots,\ a_{n})$
is a lattice point of $\sum_{j=1}^{k}\Delta_{I_{j}}$. 
\end{proof}
\begin{prop}
\label{pro:schur1}The coefficient of $t_{1}^{a_{1}}\cdots t_{n}^{a_{n}}$
in $s_{\lambda}(t_{1}+\cdots+t_{n},\ t_{2}+\cdots+t_{n},\dots,\ t_{n})$
is non-zero if and only if $(a_{1},\dots,\ a_{n})$ is a lattice point
of the polytope $\lambda_{1}\Delta_{[1,n]}+\lambda_{2}\Delta_{[2,n]}+\cdots+\lambda_{n}\Delta_{\{n\}}$.
\end{prop}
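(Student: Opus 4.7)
My plan is to expand the Schur polynomial combinatorially as a sum over semistandard Young tableaux, perform the substitution cell-by-cell, and then invoke Proposition~\ref{pro:newt2} to identify the support of $s_\lambda(t_1+\cdots+t_n,\ldots,t_n)$ as a union of lattice-point sets of Minkowski sums of simplices; the task will then reduce to matching this union with the lattice points of $\sum_i \lambda_i \Delta_{[i,n]}$.

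Start from the standard identity
\[
s_\lambda(x_1, \ldots, x_n) = \sum_T \prod_{(i,j) \in \lambda} x_{T(i,j)},
\]
where $T$ runs over semistandard Young tableaux (SSYT) of shape $\lambda$ with entries in $[n]$. Substituting $x_k = t_k + t_{k+1} + \cdots + t_n$ turns the right-hand side into $\sum_T \prod_{(i,j) \in \lambda} \bigl(t_{T(i,j)} + \cdots + t_n\bigr)$, a sum of products of $|\lambda|$ linear forms, each with nonnegative coefficients. Since the outer sum preserves nonnegativity, the coefficient of $t_1^{a_1}\cdots t_n^{a_n}$ is nonzero iff it is nonzero in at least one summand, and by Proposition~\ref{pro:newt2} this is equivalent to requiring $(a_1,\ldots,a_n)$ to be a lattice point of $\sum_{(i,j) \in \lambda} \Delta_{[T(i,j),n]}$ for some SSYT $T$.

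It remains to show that this union, taken over all SSYT $T$, coincides with the lattice points of $\sum_i \lambda_i \Delta_{[i,n]}$. The inclusion ``$\supseteq$'' will be immediate via the \emph{super-standard} SSYT $T_0$ defined by $T_0(i,j)=i$ (valid since rows are constant and columns strictly increase because $T_0(i+1,j)=i+1>i=T_0(i,j)$), for which $\sum_{(i,j)}\Delta_{[T_0(i,j),n]} = \sum_{(i,j)}\Delta_{[i,n]} = \sum_i \lambda_i \Delta_{[i,n]}$.

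I expect the reverse inclusion ``$\subseteq$'' to be the delicate point. Its key input is that the SSYT property $T(i,j)\geq i$ makes $\Delta_{[T(i,j),n]}$ a face of $\Delta_{[i,n]}$, so every vertex of the former is also a vertex of the latter. Combining this with the Minkowski lattice-point characterization (Postnikov's Proposition~14.12, cited in the proof of Proposition~\ref{pro:newt2}), any lattice point of $\sum_{(i,j)}\Delta_{[T(i,j),n]}$---being expressible as a sum of vertices of the summands---is automatically a sum of vertices of the $\Delta_{[i,n]}$'s, hence a lattice point of $\sum_i \lambda_i \Delta_{[i,n]}$, closing the argument. A content-wise expansion $\sum_T \prod_k x_k^{m_k(T)}$ would also work but would force a bipartite matching argument (Hall's condition $\sum_{i\leq j} a_i \leq \sum_{i\leq j}\lambda_i$, derived from the standard SSYT fact that the entries $\leq j$ fill a subshape of the first $j$ rows of $\lambda$), which is why the cell-wise route is preferable.
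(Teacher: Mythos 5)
Your proof is correct, and for the crucial containment step it takes a genuinely different route from the paper. The paper expands $s_\lambda$ by \emph{weight}, writing each SSYT's contribution as $(t_1+\cdots+t_n)^{w_1}\cdots t_n^{w_n}$, and then invokes the dominance inequality $w_1+\cdots+w_i\le\lambda_1+\cdots+\lambda_i$ to assert that every monomial of that product already occurs in $(t_1+\cdots+t_n)^{\lambda_1}\cdots t_n^{\lambda_n}$ --- a step the paper leaves implicit and which, to make rigorous, requires showing that replacing summands $\Delta_{[i,n]}$ by $\Delta_{[j,n]}$ with $j>i$ (the elementary dominance moves) only shrinks the Minkowski sum. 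Your cell-wise expansion localizes exactly this point: the single inequality $T(i,j)\ge i$ (from column-strictness) makes each factor's simplex $\Delta_{[T(i,j),n]}$ a face of $\Delta_{[i,n]}$, so the vertex decomposition of a lattice point of $\sum_{(i,j)}\Delta_{[T(i,j),n]}$ is verbatim a vertex decomposition inside $\sum_{(i,j)}\Delta_{[i,n]}=\sum_i\lambda_i\Delta_{[i,n]}$, with no dominance bookkeeping. Both proofs use the super-standard tableau $T_0(i,j)=i$ for the reverse inclusion, and both lean on Proposition~\ref{pro:newt2} (equivalently Postnikov's Proposition~14.12) in the same way. What your version buys is that the delicate inclusion becomes a one-line face argument; what the paper's version buys is that it works directly with the weight generating function, which is the form of the Schur expansion already quoted in its proof. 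Your closing remark correctly identifies the paper's route and the Hall-type condition it implicitly relies on.
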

\begin{proof}
Recall that\begin{eqnarray}
s_{\lambda}(t_{1}+\cdots+t_{n},t_{2}+\cdots+t_{n},\dots,t_{n}) & = & \sum_{T}(t_{1}+\cdots+t_{n})^{w_{1}}\cdots\ t_{n}^{w_{n}},\label{eq:schur1}\end{eqnarray}
 where the sum ranges over all \emph{semi-standard} Young tableaux
$T$ of shape $\lambda$ and weight $\mathbf{w}=(w_{1},\dots,w_{n})$,
i.e. $w_{i}$ is the number of $i$'s appearing in $T$ (see \cite{key-5}).
Let $T$ be a SSYT of shape $\lambda$ and weight $\mathbf{w}$. Then
$w_{1}+\cdots+w_{i}\le\lambda_{1}+\cdots+\lambda_{i},\ \forall i=1\dots n$.
Indeed, if we consider the boxes containing the numbers $1,\ 2,\dots,\ i$
in $T$, there can be no more than $i$ of them in the same column.
Hence the number of such boxes is at most the size of the first $i$
rows of $\lambda$, which is $\lambda_{1}+\cdots+\lambda_{i}$. 

It follows that any monomial $t_{1}^{a_{1}}\cdots t_{n}^{a_{n}}$
appearing in $(t_{1}+\cdots+t_{n})^{w_{1}}\cdots t_{n}^{w_{n}}$ also
appears in $(t_{1}+\cdots+t_{n})^{\lambda_{1}}\cdots t_{n}^{\lambda_{n}}$.
On the other hand, $(t_{1}+\cdots+t_{n})^{\lambda_{1}}\cdots t_{n}^{\lambda_{n}}$does
appear in the right side of (\ref{eq:schur1}) as the term corresponding
to the tableau $T$ with 1's in the first row, 2's in the second row,
etc. Therefore, the coefficient of $t_{1}^{a_{1}}\cdots t_{n}^{a_{n}}$
in $s_{\lambda}(t_{1}+\cdots+t_{n},\ t_{2}+\cdots+t_{n},\dots,\ t_{n})$
is non-zero if and only if it is non-zero in $(t_{1}+\cdots+t_{n})^{\lambda_{1}}\cdots t_{n}^{\lambda_{n}}$,
which by Proposition \ref{pro:newt2}, is non-zero if and only if
$(a_{1},\dots,\ a_{n})$ is a lattice point of $\lambda_{1}\Delta_{[1,n]}+\lambda_{2}\Delta_{[2,n]}+\cdots+\lambda_{n}\Delta_{\{n\}}$.
\end{proof}
\begin{thm}
The number of (distinct) diagonal vectors of $\lambda$-shifted Young
tableaux is equal to the number of lattice points of the polytope 

\[
\mathbf{P}_{\lambda}:=\sum_{1\le i\le j\le n-1}\Delta_{[i,j]}+\lambda_{1}\Delta_{[1,n]}+\lambda_{2}\Delta_{[2,n]}+\cdots+\lambda_{n}\Delta_{\{n\}}.\]

\end{thm}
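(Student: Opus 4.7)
The plan is to deduce the theorem from Theorem \ref{thm:main} together with the Newton-polytope descriptions in Propositions \ref{pro:newt2} and \ref{pro:schur1}. First, since $T(1,1)=1$ and $T(n+1,n+1)$ is a fixed constant for every $\lambda$-shifted tableau $T$, the map sending a diagonal vector $(d_1,\dots,d_n)$ to its gap vector $(a_1,\dots,a_n)$ with $a_i = d_{i+1}-d_i-1$ is a bijection. Consequently the number of distinct diagonal vectors equals the number of tuples $(a_1,\dots,a_n)\in\mathbb{Z}_{\ge 0}^n$ for which $N_\lambda(a_1,\dots,a_n) > 0$.

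By Theorem \ref{thm:main}, $N_\lambda(a_1,\dots,a_n) > 0$ precisely when the coefficient of $t_1^{a_1}\cdots t_n^{a_n}$ in
\[
F(t) \;=\; \prod_{1\le i<j\le n}(t_i+\cdots+t_{j-1})\cdot s_\lambda(t_1+\cdots+t_n,\,t_2+\cdots+t_n,\,\dots,\,t_n)
\]
is positive. Both factors of $F$ expand as polynomials in $t_1,\dots,t_n$ with non-negative coefficients: the first is obviously so, and the second because of the SSYT expansion recalled in the proof of Proposition \ref{pro:schur1}. Hence positivity of $[t^a] F$ is equivalent to being able to write $a = b+c$ with $b$ in the support of the first factor and $c$ in the support of the second. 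After the reindexing $\prod_{i<j}(t_i+\cdots+t_{j-1}) = \prod_{1\le i\le j\le n-1}(t_i+\cdots+t_j)$, Proposition \ref{pro:newt2} says $b$ ranges over the lattice points of $A:=\sum_{1\le i\le j\le n-1}\Delta_{[i,j]}$, and Proposition \ref{pro:schur1} says $c$ ranges over the lattice points of $B:=\lambda_1\Delta_{[1,n]}+\cdots+\lambda_n\Delta_{\{n\}}$.

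The remaining step --- which I expect to be the main obstacle --- is upgrading the statement \emph{$a$ is a sum of a lattice point of $A$ and a lattice point of $B$} to \emph{$a$ is a lattice point of $A+B = \mathbf{P}_\lambda$}. This is not automatic for arbitrary polytopes, since the integer decomposition property can fail, but it is exactly the content of \cite[Proposition 14.12]{key-2} for Minkowski sums of coordinate simplices $\Delta_I$: every lattice point of such a sum is a componentwise sum of vertices of the constituent simplices, and conversely. Applying this to the combined simplex-decomposition of $\mathbf{P}_\lambda$ identifies the $a$'s counted above with the lattice points of $\mathbf{P}_\lambda$, completing the proof. Everything else in the argument is bookkeeping.
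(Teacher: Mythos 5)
Your proof is correct and follows essentially the same route as the paper, whose own proof is just a one-line appeal to Theorem \ref{thm:main} and Propositions \ref{pro:newt2} and \ref{pro:schur1}. You usefully make explicit the two points the paper leaves implicit --- the bijection between diagonal vectors and gap vectors, and the integer-decomposition step turning ``sum of a lattice point of $A$ and a lattice point of $B$'' into ``lattice point of $A+B$'' --- and you resolve the latter with the same \cite[Proposition 14.12]{key-2} that already underlies Proposition \ref{pro:newt2}, so no new ingredient is needed.
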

\begin{proof}
By Theorem \ref{thm:main}, and Propositions \ref{pro:newt2}, \ref{pro:schur1}
it follows that $N_{\lambda}(a_{1},\dots,a_{n})\neq0$ if and only
if $(a_{1},\dots,a_{n})$ is an integer lattice point of the polytope 

\[
\sum_{1\le i\le j\le n-1}\Delta_{[i,j]}+\lambda_{1}\Delta_{[1,n]}+\lambda_{2}\Delta_{[2,n]}+\cdots+\lambda_{n}\Delta_{\{n\}}.\]

\end{proof}
In particular, if $\lambda$ has $n$ parts (i.e. $\lambda_{n}>0$),
we see that $\mathbf{P}_{\lambda}$ is combinatorially equivalent
to $\mathrm{Ass}_{n}$.

\section{Vertices of $\mathbf{P}_{\lambda}$}

In what follows we describe the vertices $\mathbf{P}_{\lambda}$ by
using techniques developed in \cite{key-2}. Given a generalized permutohedron
$P_{n}^{y}(\{y_{I}\})=\sum_{\emptyset\neq I\subseteq\{1,\dots,\ n\}}y_{I}\Delta_{I}$,
assume that its \emph{building set} $B=\{I\subseteq[n]\vert\ y_{I}>0\}$
satisfies the following conditions: 

\begin{enumerate}
\item If $I,\ J\in B$ and $I\cap J\neq\emptyset$, then $I\cup J\in B$.
\item $B$ contains all singletons $\{i\}$, for $i\in[n]$.
\end{enumerate}
A $B$\emph{-forest} is a rooted forest $F$ on the vertex set $[n]$
such that

\begin{enumerate}
\item For any $i$, $\mathrm{desc}(i,F)\in B$ ($\mathrm{desc}(i,F)$ is
the set of descendants of $i$ in $F$).
\item There are no $k\ge2$ distinct incomparable nodes $i_{1},\dots,i_{k}$
in $F$ such that $\bigcup_{j=1}^{k}\mathrm{desc}(i_{j},F)\in B$.
\item $\{\mathrm{desc}(i,F)\vert\ i$- root of $F\}=\{I\in B\vert\ I\mathrm{-maximal}\}$. 
\end{enumerate}
We will need the following result of Postnikov: 

\begin{prop}
\label{pro:Vertices}\cite[Proposition 7.9]{key-2} Vertices of $P_{n}^{y}(\{y_{I}\})$
are in bijection with $B$-forests. More precisely, the vertex $v_{F}=(t_{1},\dots,t_{n})$
of $P_{n}^{y}(\{y_{I}\})$ associated with a $B$-forest $F$ is given
by $t_{i}=\sum_{J\in B:\ i\in J\subseteq\mathrm{desc}(i,F)}y_{J}$,
for $i\in[n]$.
\end{prop}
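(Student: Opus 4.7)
The plan is to identify vertices of $P_{n}^{y}(\{y_{I}\})$ via the correspondence between vertices of a polytope and maximal cones of its normal fan. Since $P_{n}^{y}$ is a Minkowski sum of the simplices $y_{I}\Delta_{I}$, for any generic $c\in\mathbb{R}^{n}$ the vertex of $P_{n}^{y}$ maximizing $\langle c,\cdot\rangle$ equals $v(c)=\sum_{I\in B}y_{I}\,e_{j^{*}(I,c)}$, where $j^{*}(I,c)\in I$ is the unique index at which $c$ attains its maximum on $I$. The $i$-th coordinate of $v(c)$ is then $\sum_{I\in B:\, j^{*}(I,c)=i}y_{I}$, and so the combinatorial type of the vertex depends only on the collection $\{j^{*}(I,c)\}_{I\in B}$, which in turn depends only on the linear order on $[n]$ induced by $c$.

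Next I would encode this combinatorial type as a $B$-forest. Given generic $c$, define for each $i\in[n]$ the set $D_{i}$ to be the maximal $J\in B$ containing $i$ with $c_{i}\geq c_{k}$ for all $k\in J$. Such a maximum exists and is unique by axiom (1) of the building set: any two candidates contain $i$, so they intersect, hence their union is again in $B$ and remains a valid candidate. Moreover, the family $\{D_{i}\}$ is laminar, since if $i\neq j$ and $D_{i}\cap D_{j}\neq\emptyset$ but neither contains the other, then $D_{i}\cup D_{j}\in B$ by axiom (1), and assuming $c_{i}>c_{j}$ this union is a strictly larger valid candidate for $D_{i}$, contradicting its maximality. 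A laminar family indexed by $[n]$ containing all singletons defines a rooted forest $F_{c}$ on $[n]$ with $\mathrm{desc}(i,F_{c})=D_{i}$; properties (1)--(3) of a $B$-forest then follow directly from axiom (1), the same laminarity argument applied to a hypothetical union over incomparable nodes, and the observation that the maximal $D_{i}$'s coincide with the maximal elements of $B$.

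To finish, I would match the vertex formula and then establish surjectivity of $c\mapsto F_{c}$. For $J\in B$ containing $i$, the equality $j^{*}(J,c)=i$ is equivalent to $c_{i}>c_{k}$ for all $k\in J\setminus\{i\}$, which by maximality of $D_{i}$ is equivalent to $J\subseteq D_{i}=\mathrm{desc}(i,F_{c})$. Summing yields $(v(c))_{i}=\sum_{J\in B:\, i\in J\subseteq\mathrm{desc}(i,F_{c})}y_{J}$, exactly the claimed formula. For surjectivity, given any $B$-forest $F$ I would construct a generic cost vector $c$ realizing $F_{c}=F$, for example by assigning $c_{i}$ to decrease along a DFS traversal of $F$ with small generic perturbations; here property (2) of the $B$-forest $F$ is precisely what prevents the associated $D_{i}$ from being strictly larger than $\mathrm{desc}(i,F)$. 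The main obstacle is the laminarity/maximality analysis underlying the construction of $F_{c}$, which rests crucially on axiom (1) of the building set; once this is in place, the vertex formula and both directions of the bijection reduce to direct bookkeeping.
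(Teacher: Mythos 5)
The paper does not prove this proposition at all --- it is quoted verbatim from Postnikov (\cite[Proposition 7.9]{key-2}) --- so there is no internal proof to compare against; your argument is essentially the standard normal-fan proof, which is also the route taken in the cited source. Your outline is correct: the vertex maximizing a generic $c$ over a Minkowski sum is the sum of the maximizing vertices of the summands, giving $v(c)=\sum_{I\in B}y_{I}e_{j^{*}(I,c)}$; the maximality and laminarity of the sets $D_{i}$ both follow from axiom (1) exactly as you say (for laminarity, if $c_{i}>c_{j}$ then $D_{i}\cup D_{j}$ beats $D_{i}$); and the equivalence $j^{*}(J,c)=i\Leftrightarrow i\in J\subseteq D_{i}$ yields the coordinate formula. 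Two points deserve to be made explicit rather than waved at. First, injectivity of $F\mapsto v_{F}$: this follows because a vertex of a Minkowski sum decomposes \emph{uniquely} into vertices of the summands, so $v_{F}$ determines all the indices $j^{*}(I,c)$, and $D_{i}$ is recovered as the maximal $J\in B$ with $j^{*}(J,c)=i$; without this remark you only have a surjection from $B$-forests onto vertices. Second, for surjectivity your ``DFS with perturbations'' can be replaced by the cleaner statement that \emph{any} generic linear extension $c$ of the forest order (descendants smaller) realizes $F$: given $J\in B$ with $i\in J$ and $c_{i}\ge c_{k}$ on $J$, one may first enlarge $J$ to $J\cup\mathrm{desc}(i,F)$ using axiom (1), then take the $F$-maximal elements $i=i_{1},\dots,i_{r}$ of this set; repeated application of axiom (1) shows $\bigcup_{l}\mathrm{desc}(i_{l},F)\in B$, so property (2) of a $B$-forest forces $r=1$ and hence $J\subseteq\mathrm{desc}(i,F)$. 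With these two details filled in, the proof is complete and correct.
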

\begin{rem*}
It's not hard to see that Proposition \ref{pro:Vertices} remains
true even if we allow the building set $B$ not to contain the singletons
$\{i\}$. We will make use of this later on. 
\end{rem*}
The combinatorial structure of $\mathbf{P}_{\lambda}$ clearly depends
only on its building set, i.e. the number of non-zero parts of the
partition $\lambda$. Assume $\lambda_{1},\dots,\lambda_{k}>0,\ \lambda_{k+1}=\cdots=\lambda_{n}=0$,
so that the building set of $\mathbf{P}_{\lambda}$ is \[
B_{k}=\{[i,\ j]\vert\ 1\le i\le j\le n-1\}\cup\{[i,\ n]\vert\ 1\le i\le k\}.\]

We first deal with the case $k=n$. Let $T$ be a plane binary tree
on $n$ nodes. For a node $v$ of $T$, denote by $L_{v},R_{v}$ the
left and right branches at $v$. There is a unique way to label the
nodes of $T$ such that for any node $v$, its label is greater than
all labels in $L_{v}$ and smaller than all labels in $R_{v}$. This
labelling is called the \emph{binary search labelling }of $T$. 

\begin{prop}
\label{pro:Bn forests}\cite[Proposition 8.1]{key-2}The $B_{n}$-forests
are exactly plane binary trees on $n$ nodes with the binary search
labeling. 
\end{prop}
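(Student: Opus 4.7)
The plan is to unpack the $B_n$-forest axioms and show that they force the forest to be a plane binary tree carrying the binary search labeling, and conversely.

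First I would observe that, since the unique maximal element of $B_n$ is $[1,n]$, condition (3) of the definition of a $B_n$-forest forces $F$ to actually be a tree (one root), with $\mathrm{desc}(\mathrm{root},F)=[1,n]$. By condition (1) applied to every node, each $\mathrm{desc}(i,F)$ is an interval $[a_i,b_i]\subseteq[1,n]$, and obviously $i\in[a_i,b_i]$. So I will track each node $i$ together with its interval $[a_i,b_i]$ and let $\ell=i$ be thought of as a label inside this interval.

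Next I would analyze the children of a fixed node $i$ with $\mathrm{desc}(i,F)=[a,b]$. The descendant intervals of the children partition $[a,b]\setminus\{i\}$ into (disjoint) subintervals. The key step is to use condition (2) to rule out any \emph{splitting} of a maximal run of consecutive integers in $[a,b]\setminus\{i\}$: if two children $c,c'$ had descendant intervals $[a,m]$ and $[m+1,i-1]$ with $m<i-1$, these are incomparable nodes whose union $[a,i-1]$ lies in $B_n$, violating (2). Hence $i$ has at most two children: a left child with descendant interval exactly $[a,i-1]$ (present iff $a<i$) and a right child with descendant interval exactly $[i+1,b]$ (present iff $i<b$). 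Distinguishing these two children as ``left'' and ``right'' equips $F$ with the structure of a plane binary tree, and by construction $\ell(v)$ lies strictly between the labels of the left subtree and right subtree at every $v$, which is the binary search property.

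For the converse direction I would start from a plane binary tree $T$ on $n$ nodes with the binary search labeling and verify the three $B_n$-forest conditions. An easy induction on the height shows $\mathrm{desc}(v,T)$ is the interval $[\min,\max]$ of labels in the subtree rooted at $v$, giving (1), and the unique root has $\mathrm{desc}=[1,n]$, giving (3). For (2), take any incomparable $i_1,\dots,i_k$ ($k\ge2$) and let $w$ be the lowest common ancestor of $\{i_1,\dots,i_k\}$ in $T$; since the $i_j$ are incomparable, $w\notin\{i_1,\dots,i_k\}$, and the $i_j$ are distributed across the left and right subtrees of $w$. By the binary search property, $\bigcup_j\mathrm{desc}(i_j,T)$ contains labels both below and above $\ell(w)$ but misses $\ell(w)$ itself, hence is not an interval and so is not in $B_n$.

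The main obstacle is the step that rules out extra children, i.e., showing that a node's descendant interval $[a,b]\setminus\{i\}$ cannot be further subdivided into more than the two ``natural'' pieces. Everything else is a direct translation of the axioms; once that indivisibility is in hand, the bijection with binary-search-labeled plane binary trees is almost tautological.
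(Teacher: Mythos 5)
Your argument is correct. Note that the paper does not prove this statement at all --- it is quoted verbatim from Postnikov's \emph{Permutohedra, associahedra, and beyond} (Proposition 8.1 there), so there is no in-paper proof to compare against; your write-up is a sound self-contained verification along the standard lines. Both directions hold up: condition (3) with the unique maximal element $[1,n]$ forces a single root, condition (1) makes every descendant set an interval containing its node, and your use of condition (2) on two adjacent sibling intervals (whose union is again an interval, hence in $B_{n}$) is exactly the right mechanism to show each of $[a,i-1]$ and $[i+1,b]$ is carried by a single child, yielding the plane binary structure with the binary search labeling; the converse via the lowest common ancestor, whose label separates the union of descendant sets of incomparable nodes and is omitted from it, is likewise complete.
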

Let $T$ be a $B_{n}$-forest. It's easy to see that $\mathrm{desc}(x,T)$
has form $[a,\ n]$ if and only if the path from the root to $x$
always goes to the right. In this case, $\mathrm{desc}(x,T)=[n-|L_{x}|,n]$
and $n-|L_{x}|$ is maximal when $x$ is the right-most node in $T$,
i.e. $x=n$. It follows that $\{\mathrm{desc}(x,T)\vert\ x\in[n]\}\subseteq B_{k}\subseteq B_{n}\Leftrightarrow|L_{n}|\ge n-k$.
This argument together with Proposition \ref{pro:Bn forests} implies

\begin{prop}
\label{pro:Bk forests}The $B_{k}$-forests are exactly plane binary
trees on $n$ nodes with the binary search labeling and such that
$|L_{n}|\ge n-k$, i.e. such that the (left) subtree of the right-most
node in $T$ has size at least $n-k$. 
\end{prop}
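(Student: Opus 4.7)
The plan is to bootstrap from Proposition \ref{pro:Bn forests}: since $B_{k}\subseteq B_{n}$, every $B_{k}$-forest is \emph{a fortiori} a $B_{n}$-forest, hence a plane binary tree on $[n]$ equipped with the binary search labeling. I would then characterize precisely which of those trees satisfy the extra restrictions imposed by the smaller building set $B_{k}$.

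First, I would dispatch axioms (2) and (3) of the $B$-forest definition as essentially automatic. For axiom (2), any union of descendant sets lying in $B_{k}$ lies \emph{a fortiori} in $B_{n}$, so the non-existence of such an illicit union for $B_{n}$ transfers immediately to $B_{k}$. For axiom (3), assuming $k\ge 1$, the unique maximal element of $B_{k}$ is still $[1,n]$: any $[i,j]$ with $j\le n-1$ is contained in $[1,n-1]\subset[1,n]$, any $[i,n]$ with $i\le k$ is contained in $[1,n]$, and $[1,n]$ itself lies in $B_{k}$. Hence axiom (3) still forces a single root whose descendant set is $[1,n]$, exactly as for $B_{n}$.

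The real content lies in axiom (1), which demands $\mathrm{desc}(x,T)\in B_{k}$ for every node $x$. I would invoke the standard BST fact that in a plane binary tree with binary search labeling, $\mathrm{desc}(x,T)=[x-|L_{x}|,\, x+|R_{x}|]$, since the labels in the subtree at $x$ form a contiguous block partitioned by $x$ into $|L_{x}|$ values below and $|R_{x}|$ values above. Such an interval always lies in $B_{n}$, and it fails to lie in $B_{k}$ precisely when it has the form $[a,n]$ with $a>k$, which in turn forces $x+|R_{x}|=n$, i.e., $x$ lies on the right spine of $T$ (the path from the root to the node labeled $n$).

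To conclude, I would observe that the descendant sets along the right spine are nested outward from $\mathrm{desc}(n,T)$: since $n\in\mathrm{desc}(x,T)$ for every right-spine $x$, we have $[n-|L_{n}|,n]=\mathrm{desc}(n,T)\subseteq\mathrm{desc}(x,T)=[x-|L_{x}|,n]$, so $x-|L_{x}|\le n-|L_{n}|$. Therefore all of them lie in $B_{k}$ iff the tightest one, $\mathrm{desc}(n,T)$, does, iff $n-|L_{n}|\le k$, equivalently $|L_{n}|\ge n-k$. The only mild obstacle I anticipate is the bookkeeping to verify that axioms (2) and (3) transfer from $B_{n}$ to $B_{k}$; once the right-spine nesting observation is isolated, the reduction of the descendant condition to a single inequality at the node $n$ is immediate.
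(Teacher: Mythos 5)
Your argument follows the same route as the paper's: reduce to Proposition \ref{pro:Bn forests}, observe that the only descendant sets that can fall outside $B_{k}$ are the intervals $[a,n]$ attached to the right spine, note that these are nested with the tightest one at the node labeled $n$, and conclude that axiom (1) for $B_{k}$ amounts to $|L_{n}|\ge n-k$. That core analysis is correct, and you are in fact more explicit than the paper about axioms (2) and (3). The one step that does not work as written is the opening claim that ``since $B_{k}\subseteq B_{n}$, every $B_{k}$-forest is \emph{a fortiori} a $B_{n}$-forest.'' For axiom (2) the containment cuts the wrong way in that direction: a $B_{k}$-forest is only guaranteed to have no illicit union lying in $B_{k}$, and a priori some incomparable nodes could have descendant sets whose union lies in $B_{n}\setminus B_{k}$, i.e.\ equals $[b,n]$ with $b>k$; your explicit axiom-(2) argument (``non-existence for $B_{n}$ transfers to $B_{k}$'') establishes the converse transfer, which is what you need for the opposite inclusion. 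The gap is easily closed: if incomparable nodes $i_{1},\dots,i_{m}$ of a $B_{k}$-forest had $\bigcup_{j}\mathrm{desc}(i_{j})=[b,n]$, then some $i_{j}$ satisfies $n\in\mathrm{desc}(i_{j})$, so by axiom (1) $\mathrm{desc}(i_{j})=[c,n]$ with $c\le k$, and $\mathrm{desc}(i_{j})\subseteq[b,n]$ forces $b\le c\le k$, whence $[b,n]\in B_{k}$ after all and axiom (2) for $B_{k}$ already excludes it. With that patch (and your correct observation that $[1,n]$ remains the unique maximal element of $B_{k}$ for $k\ge1$, so axiom (3) is unaffected), the proof is complete; the paper itself silently elides exactly this point.
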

\begin{cor}
The number of vertices of $\mathbf{P}_{\lambda}$ is \[
C_{1}C_{n-1}+C_{2}C_{n-2}+\cdots+C_{k}C_{n-k}\]

where $C_{n}=\frac{1}{n+1}{2n \choose n}$ denotes the $n^{\mathrm{th}}$
Catalan number.
\end{cor}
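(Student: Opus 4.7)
The plan is to combine Propositions \ref{pro:Vertices} and \ref{pro:Bk forests} with the recursive Catalan decomposition along the right spine of a plane binary tree. By Proposition \ref{pro:Vertices}, invoked in the form of the remark that allows the building set to omit the singleton $\{n\}$, the vertices of $\mathbf{P}_{\lambda}$ are in bijection with $B_{k}$-forests; and by Proposition \ref{pro:Bk forests}, these are exactly the plane binary trees $T$ on $n$ nodes carrying the binary search labelling with $|L_{n}|\ge n-k$. Counting vertices of $\mathbf{P}_{\lambda}$ is therefore reduced to counting such trees.

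I would carry out that count by stratifying on $m:=|L_{n}|$, which takes values in $\{n-k,\,n-k+1,\dots,\,n-1\}$. For each such $m$, a plane binary tree $T$ with $|L_{n}|=m$ is specified by an independent pair: first, the left subtree of the rightmost node, which is an arbitrary plane binary tree on $m$ nodes and therefore has $C_{m}$ possibilities; and second, the residual tree $T'$ obtained by deleting $n$ together with its left subtree, which is an arbitrary plane binary tree on $n-1-m$ nodes and therefore has $C_{n-1-m}$ possibilities. Node $n$ is then grafted onto $T'$ at the unique right-most attachment slot (the right-child position of the current rightmost node of $T'$, or at the root if $T'$ is empty), and the binary search labelling is forced by the resulting shape.

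Summing over the admissible values of $m$ yields
\[
\#\{B_{k}\text{-forests}\}\;=\;\sum_{m=n-k}^{n-1}C_{m}\,C_{n-1-m},
\]
which, after re-indexing by $j=n-m$, reproduces the expression stated in the corollary. The only point needing care is verifying that the map $(\text{left subtree of }n,\;T')\mapsto T$ is genuinely a bijection, but this is the standard Catalan-style decomposition along the right spine of a plane binary tree; once the earlier propositions are in hand, the rest is direct.
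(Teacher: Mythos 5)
Your reduction to counting plane binary trees on $n$ nodes with $|L_{n}|\ge n-k$ follows the paper, and your right-spine decomposition is carried out correctly: deleting the rightmost node together with its left subtree $L$ of size $m$ leaves an arbitrary plane binary tree on $n-1-m$ nodes, so the number of qualifying trees is $\sum_{m=n-k}^{n-1}C_{m}C_{n-1-m}$. The gap is in your last step. Re-indexing by $j=n-m$ turns this sum into $\sum_{j=1}^{k}C_{j-1}C_{n-j}=C_{0}C_{n-1}+C_{1}C_{n-2}+\cdots+C_{k-1}C_{n-k}$, which is \emph{not} the expression $C_{1}C_{n-1}+C_{2}C_{n-2}+\cdots+C_{k}C_{n-k}$ in the statement; the two agree only in the first term (because $C_{0}=C_{1}$). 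So the proof as written does not establish the displayed formula.

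The deeper point is that your tree count is the correct one and the mismatch exposes an index error in the corollary itself: when $|L|=n-i$, the complementary tree $T\backslash L$ has $i$ nodes but its rightmost node is forced to have an empty left slot, so there are $C_{i-1}$ choices for it, not $C_{i}$. A sanity check at $k=n$ makes this vivid: every tree then qualifies, your sum telescopes via the Catalan recurrence to $C_{n}$ (the number of plane binary trees on $n$ nodes, i.e.\ of vertices of $\mathrm{Ass}_{n}$), whereas the stated formula gives $C_{n+1}-C_{n}$. You should therefore either prove the corrected formula $\sum_{i=1}^{k}C_{i-1}C_{n-i}$ or flag the statement as erroneous; no argument can recover the formula as printed. (A separate caveat, upstream of the step you were asked to supply: for $k<n$ the building set $B_{k}$ omits the singleton $\{n\}$, and one should verify that $B_{k}$-forests still biject with vertices in that case --- already for $n=3$, $k=1$ the polytope $\Delta_{\{1\}}+\Delta_{\{2\}}+\Delta_{\{1,2\}}+\lambda_{1}\Delta_{\{1,2,3\}}$ is a quadrilateral while there are only two $B_{1}$-forests, so this step also needs repair.)
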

\begin{proof}
By Propositions \ref{pro:Vertices} and \ref{pro:Bk forests}, the
number of vertices of $\mathbf{P}_{\lambda}$ is equal to the number
of plane binary trees $T$ on $n$ nodes such that left subtree $L$
of the right-most node in $T$ has size at least $n-k$. If $|L|=n-i$,
then there are $C_{n-i}$ ways to choose $L$ and $C_{i}$ ways to
choose the tree $T\backslash L$. Summing over $i=1,\dots,\ k$ yields
the desired formula.
\end{proof}
To describe the vertices of $\mathbf{P}_{\lambda}$, recall that plane
binary trees $T$ on $n$ nodes are in bijective correspondence with
the $C_{n}$ subdivisions of the shifted Young diagram $D_{\emptyset}$
into $n$ rectangles. This can be defined inductively as follows:
Let $i$ be the root of $T$ (in the binary search labeling). Then
draw an $(|L_{i}|+1)\times(|R_{i}|+1)$ rectangle. Then attach the
subdivisions corresponding to the binary trees $L_{i},\ R_{i}$ to
the left and, respectively, bottom of the rectangle. 

For a subdivision $\Xi$ of $D_{\emptyset}$ into $n$ rectangles,
the \emph{$i^{th}$} \emph{rectangle }is the rectangle containing
the $i^{\mathrm{th}}$ diagonal box of $D_{\emptyset}$. If $T$ is
the binary tree corresponding to $\Xi$, then the $i^{\mathrm{th}}$
rectangle of $\Xi$ has size $(|L_{i}|+1)\times(|R_{i}|+1)$. In particular,
$|L_{n}|+1$ is the length of the (bottom-right) vertical strip of
the subdivision $\Xi$. 

\begin{example}
Here is a subdivision of $D_{\emptyset}$ and the corresponding binary
tree with the binary search labeling when $n=4$.

\medskip{}

\includegraphics[scale=0.6]{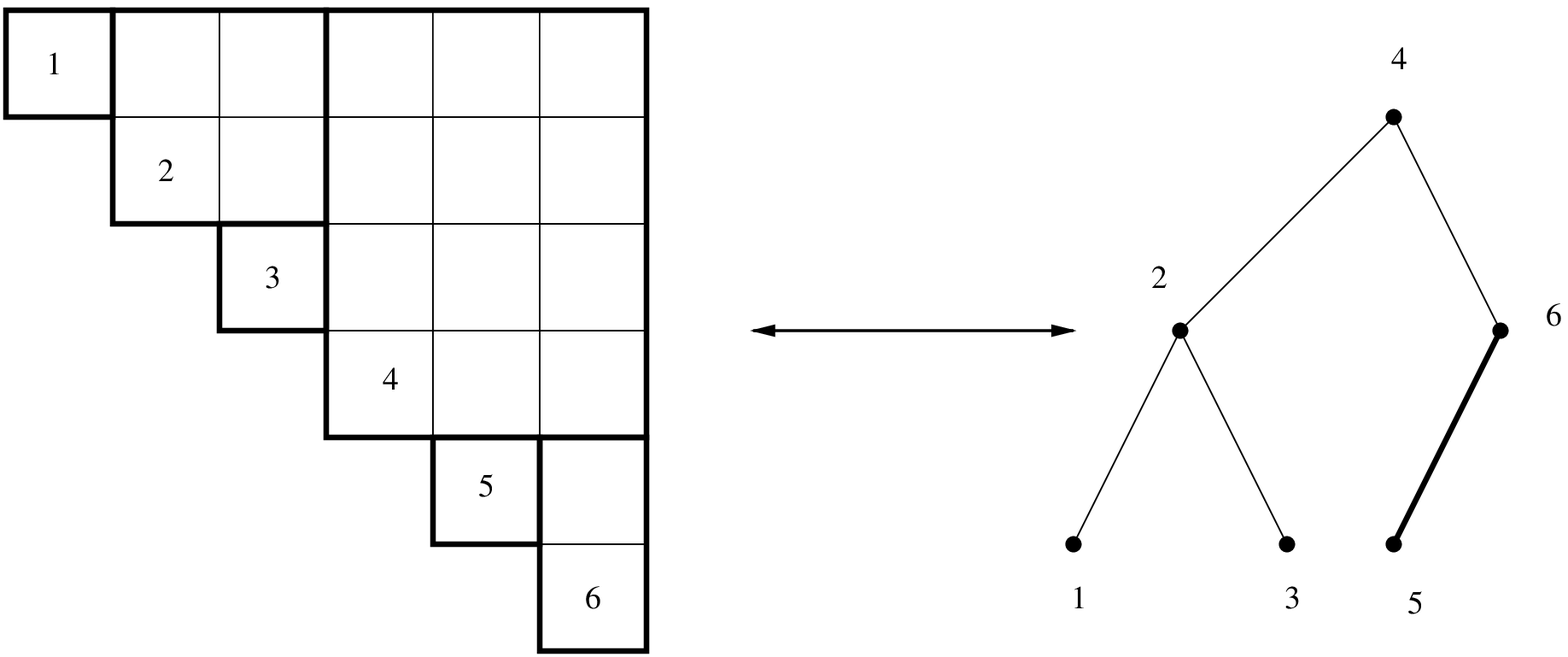}

\smallskip{}

\end{example}
We are finally in a position to prove the main result of this paper. 

\begin{thm}
\label{thm:Vertices_of_Plambda}Vertices of $\mathbf{P}_{\lambda}$
are in bijection with subdivisions of the shifted diagram $D_{\emptyset}$
into $n$ rectangles such that the bottom-right vertical strip of
the subdivision has at least $n-k+1$ boxes. Specifically, let $\Xi$
be such a subdivision. Then we can get a subdivision $\Xi^{*}$ of
$D_{\lambda-\left\langle 1^{k}\right\rangle }$ by merging the rectangles
in $\Xi$ with the rows of the Young diagram of $\lambda-\langle1^{k}\rangle$
that they border. Then the corresponding vertex of $\mathbf{P}_{\lambda}$
is $v_{\Xi}=(t_{1},\dots,t_{n})$, where $t_{i}$ is the number of
boxes in the $i^{\mathrm{th}}$ region of $\Xi^{*}$. 
\end{thm}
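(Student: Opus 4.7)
The plan is to combine Postnikov's vertex-via-$B$-forest description (Proposition \ref{pro:Vertices}, extended by the remark to building sets without singletons) with Proposition \ref{pro:Bk forests}, and then translate the algebraic vertex coordinates into the claimed geometric count.

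First, applying Proposition \ref{pro:Vertices} to $\mathbf{P}_{\lambda}$, whose building set is $B_{k}$, identifies vertices with $B_{k}$-forests. By Proposition \ref{pro:Bk forests}, these are plane binary trees on $n$ nodes with the binary search labeling and $|L_{n}| \ge n-k$. Composing with the recursive bijection between such binary trees and subdivisions of $D_{\emptyset}$ into $n$ rectangles yields the first claim: vertices correspond to subdivisions $\Xi$ of $D_{\emptyset}$ whose bottom-right vertical strip has $|L_{n}| + 1 \ge n-k+1$ boxes.

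Next, for the vertex formula I would invoke the coordinate expression $t_{i} = \sum_{J \in B_{k},\ i \in J \subseteq \mathrm{desc}(i,F)} y_{J}$ from Proposition \ref{pro:Vertices}, with weights $y_{[a,b]} = 1$ for $b \le n-1$ and $y_{[a,n]} = \lambda_{a}$. Since $F$ is a BST, $\mathrm{desc}(i,F) = [p,q]$ is an interval. When $q < n$, only intervals with $b < n$ appear and the sum equals $(i-p+1)(q-i+1)$, the size of the $i^{\mathrm{th}}$ rectangle of $\Xi$ (so no merging is needed). When $q = n$, the additional intervals $[a,n]$ with $p \le a \le \min(i,k)$ contribute $\sum_{a=p}^{\min(i,k)} \lambda_{a}$, which I would match with the boxes added by merging the rows of the Young diagram of $\lambda - \langle 1^{k} \rangle$ indexed by $a \in [p,\min(i,k)]$.

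The principal obstacle I anticipate is this last identification: checking that the rows of the Young diagram of $\lambda - \langle 1^{k} \rangle$ bordering the $i^{\mathrm{th}}$ rectangle are precisely those with index $a \in [p,\min(i,k)]$, so that the merging construction yields a well-defined subdivision $\Xi^{*}$ of $D_{\lambda - \langle 1^{k} \rangle}$ with region sizes reproducing the algebraic $t_{i}$ in every case. The case analysis on $q$ and the boundary case $i = n$ (where the rectangle is the bottom-right vertical strip of $\Xi$) need to be handled carefully.
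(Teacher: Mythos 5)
Your plan coincides with the paper's proof step for step: Proposition \ref{pro:Vertices} (via the remark allowing building sets without singletons) together with Proposition \ref{pro:Bk forests} and the tree--subdivision bijection gives the first claim, and the coordinate formula $t_{i}=\sum_{J\in B_{k},\ i\in J\subseteq\mathrm{desc}(i,F)}y_{J}$, split according to whether $\mathrm{desc}(i,F)=[p,q]$ reaches $n$, gives exactly the two cases $(|L_{i}|+1)(|R_{i}|+1)$ and $(|L_{i}|+1)|R_{i}|+\sum_{a=p}^{\min(i,k)}\lambda_{a}$ that the paper computes. Your algebra is correct and complete.

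The obstacle you flag at the end is genuine, and the paper does not actually dispose of it --- it simply asserts that in each case $t_{i}$ is the number of boxes of the $i^{\mathrm{th}}$ region of $\Xi^{*}$. Carrying out your proposed matching: for $i$ with $q=n$ the rectangle has $(i-p+1)(n-i+1)$ boxes and merging adds $\sum_{a=p}^{\min(i,k)}(\lambda_{a}-1)$ boxes, so the region of $\Xi^{*}$ has $(i-p+1)(n-i)+\sum_{a=p}^{\min(i,k)}\lambda_{a}+\bigl(i-\min(i,k)\bigr)$ boxes, which equals the algebraic $t_{i}$ precisely when $i\le k$. Every node on the right path other than $n$ itself is $\le p_{n}-1\le k-1$, so the identification holds there; but for $i=n$ (when $k<n$) the region of $\Xi^{*}$ has $n-k$ more boxes than the algebraic $t_{n}$. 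This is not an artifact of your computation: every point of $\mathbf{P}_{\lambda}$ has coordinate sum ${n \choose 2}+|\lambda|$, while the regions of $\Xi^{*}$ partition $D_{\lambda-\langle1^{k}\rangle}$, which has ${n+1 \choose 2}+|\lambda|-k$ boxes, an excess of exactly $n-k$; the paper's own Example (claimed vertex $(1,10,1,2)$, coordinate sum $14$ versus ${4 \choose 2}+7=13$) exhibits the same overcount in the last coordinate. So in writing up the final step you should either prove the corrected statement (the true $t_{n}$ is the size of the $n^{\mathrm{th}}$ region minus $n-k$, i.e.\ one first deletes the bottom $n-k$ boxes of the $n^{\mathrm{th}}$ column, as the paper in effect does later when constructing the tableau) or restrict the literal claim to $i\le k$. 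Apart from this boundary case, which is a defect of the statement rather than of your argument, your route is the paper's route.
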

\begin{proof}
The first part of the theorem follows from Proposition \ref{pro:Bk forests}
and the discussion preceeding the theorem. To prove the second part,
we use Proposition \ref{pro:Vertices}. Recall that the building set
of $\mathbf{P}_{\lambda}$ is $B_{k}=\{[i,j]\vert\ 1\le i\le j\le n\}\cup\{[i,n]\vert\ 1\le i\le k\}$,
and $\mathbf{P}_{\lambda}=\sum_{[i,j]\in B_{k}}y_{ij}\Delta_{[i,j]}$
where $y_{ij}=1$ if $j\neq1$ and $y_{in}=\lambda_{i}$ . Let $T$
be a $B_{k}$-forest, i.e. a binary tree on $n$ nodes with the binary
search labeling such that $|L_{n}|\ge n-k$ (cf. Proposition \ref{pro:Bk forests}.)
Note that $\mathrm{desc}(i,T)=[i-|L_{i}|,i+|R_{i}|]$. Now Proposition
\ref{pro:Vertices} implies that the correponding vertex $v_{T}=(t_{1},\dots,\ t_{n})$
of $\mathbf{P}_{\lambda}$ is given by 

\begin{eqnarray*}
t_{i} & = & \sum_{J\in B_{k},\ i\in J\subseteq\mathrm{desc}(i,F)}y_{J}=\sum_{[k,l]\in B_{k},\ i-|L_{i}|\le k\le i\le l\le i+|R_{i}|}y_{kl}\\
 & = & (|L_{i}|+1)\cdot|R_{i}|+\sum_{k=i-|L_{i}|}^{i}y_{k(i+|R_{i}|)}.\end{eqnarray*}
If the $i^{\mathrm{th}}$ rectangle of $\Xi$ borders the right edge
of $D_{\emptyset}$ (i.e. $n\in\mathrm{desc}(i,\ T)$), then $t_{i}=(|L_{i}|+1)\cdot|R_{i}|+\sum_{k=i-|L_{i}|}^{i}\lambda_{k}$.
Otherwise, $t_{i}=(|L_{i}|+1)\cdot(|R_{i}|+1)$ . In each case, $t_{i}$
is the number boxes in the $i^{\mathrm{th}}$ region of $\Xi^{*}$.
\end{proof}
\begin{example}
\label{exa:subdiv}Let $n=4,\ \lambda=(4,2,1,0),\ k=3$. The figure
shows how a subdivision $\Xi$ of $D_{\emptyset}$ yields the subdivision
$\Xi^{*}$ of $D_{\lambda-\langle1^{k}\rangle}=D_{(3,1,0)}$. The
corresponding vertex of $\mathbf{P}_{\lambda}$ is given by counting
boxes in the regions of $\Xi^{*}$: $v_{\Xi^{*}}=(1,10,1,2).$ It
follows that there is a (4,2,1,0)-shifted Young tableau $T$ whose
diagonal vector is $\mathrm{diag}(T)=(1,1+1+1,1+1+1+10+1,1+1+1+10+1+2)=(1,3,14,16)$.

\medskip{}

\includegraphics[scale=0.5]{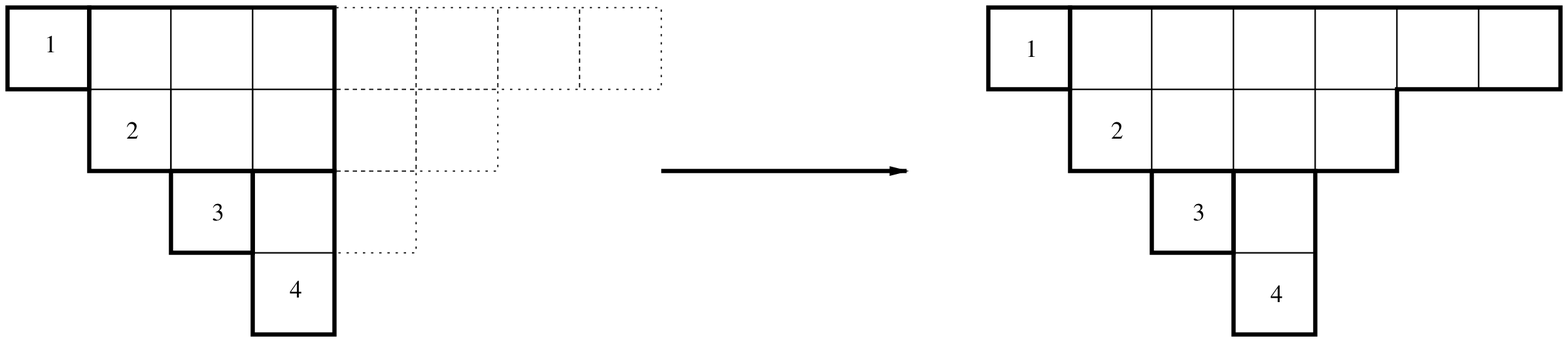}

\medskip{}

On the other hand, one can directly construct $\lambda$-shifted Young
tableaux with diagonal vector $v_{\Xi^{*}}=(c_{1},c_{2},\dots,c_{n})$
by using the subdivision $\Xi^{*}$. Indeed, we know what the diagonal
vector of the tableau $(a_{1},\dots,a_{n})$ should be. Consider again
the subdivision $\Xi^{*}$ of $D_{\lambda-\langle1^{k}\rangle}$.
We can extend the diagram $D_{\lambda-\langle1^{k}\rangle}$ to $D_{\lambda}$
by first adding a box to the left of each row of $D_{\lambda-\langle1^{k}\rangle}$,
and then, by deleting the last $n-k$ boxes in the $n^{\textrm{th}}$
column of $D_{\lambda-\langle1^{k}\rangle}$. Now, we start by putting
$a_{1},\dots,a_{n}$ in the diagonal boxes of $D_{\lambda}$. The
remaining part of $D_{\lambda}$ is divided into $n$ regions by $\Xi^{*}$.
Finaly, for each $i=1,\dots,n$, put the $c_{i}$ numbers $a_{i}+1,\dots,a_{i+1}-1$
in the $i^{\textrm{th}}$ region of $\Xi^{*}$ in a standard way,
i.e. such that entries increase along rows and down columns (as before,
we set $a_{n+1}=|D_{\lambda}|+1$.) In this way we obtain a $\lambda$-shifted
tableau $T$ such that $\textrm{diag}(T)=(a_{1},\dots,\ a_{n})$. 

We illustrate the above procedure for the subdivision in Example \ref{exa:subdiv}. 

\medskip{}

\includegraphics[scale=0.57]{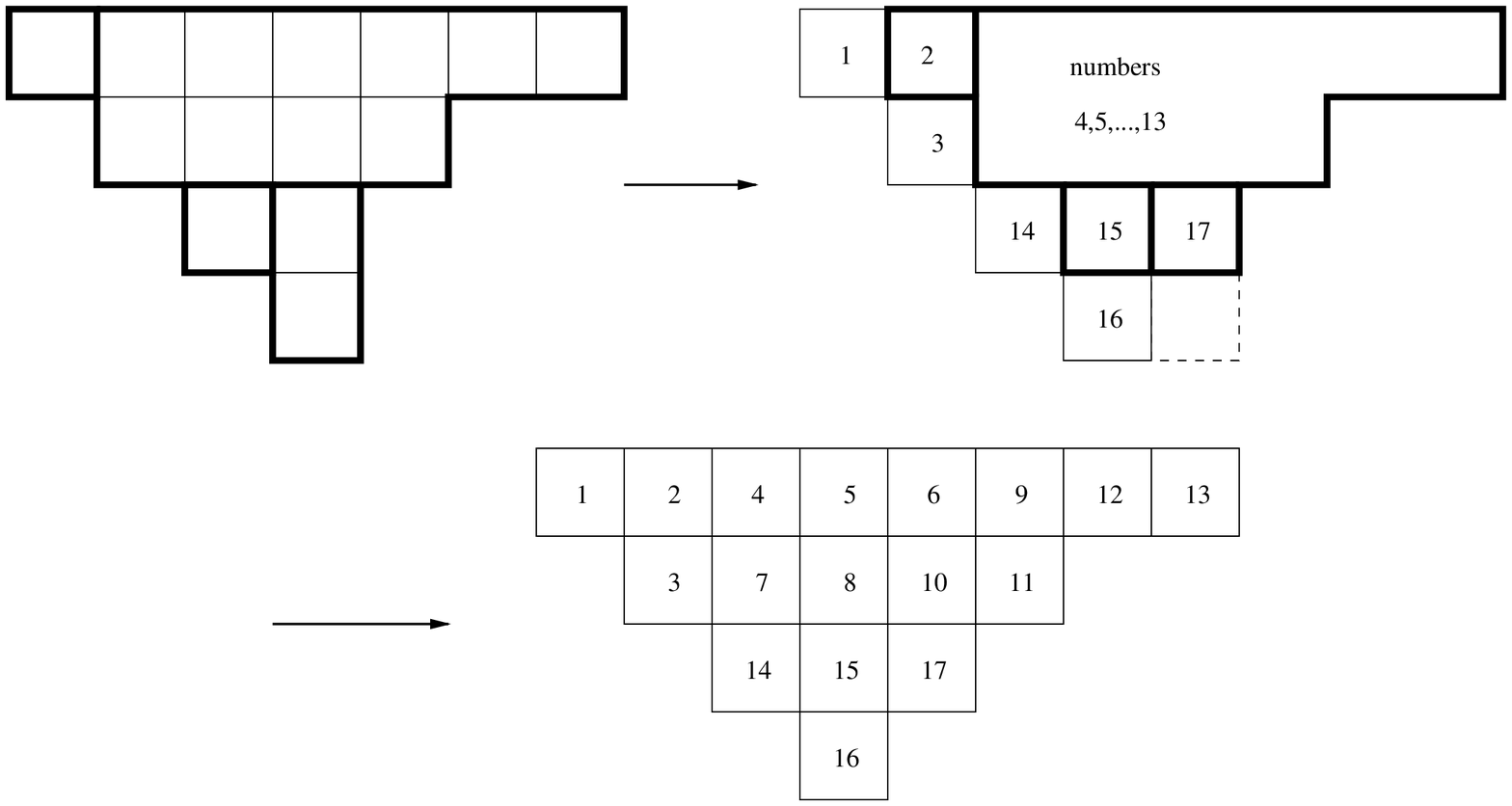}

\medskip{}

\end{example}
\begin{acknowledgement*}
I would like to thank Alexander Postnikov for suggesting the problem
and for all the helpful discussions and ideas regarding the problem.
\end{acknowledgement*}

\end{document}